\documentclass[a4paper,12pt]{amsart}
\usepackage{amsmath}
\usepackage{amssymb}
\usepackage{mathrsfs}
\usepackage{enumerate}
\usepackage{ifthen}
\usepackage{graphicx}
\usepackage[T1]{fontenc} 
\usepackage{tabularx}
\usepackage{multirow}
\usepackage{color,soul}
\usepackage{tikz}
\usepackage{pgfplots} 
\usepackage{amsmath, amssymb}

\nonstopmode \numberwithin{equation}{section}
\newtheorem{thm}{Theorem}[section]
\newtheorem{cor}{Corollary}[section]
\newtheorem{lem}{Lemma}[section]

\newtheorem{conj}{Conjecture}

\theoremstyle{definition}

\newtheorem{rem}{Remark}[section]
\usepackage{hyperref}
\hypersetup{
    colorlinks=true,
    citecolor=blue,
    linkcolor=blue,
    urlcolor=blue
}

\newcounter{minutes}
\divide\time by 60
\newcounter{hours}
\multiply\time by 60
\addtocounter{minutes}{-\time}

\newcounter {own}
\def\theown {\thesection       .\arabic{own}}

{\qed\bigskip}

\newcounter{alphabet}
\newcounter{tmp}

\begin{document}
\title{A note on a subclass of  Bazilevi{\v{c}} functions}
\author{Lokenath Thakur}
\address{Lokenath Thakur, National Institute Of Technology Durgapur, West Bengal, India}
\email{lokenaththakur1729@gmail.com}
\subjclass[2010]{Primary 30C45, 30C55}
\keywords{univalent functions, Bazilevi{\v{c}} function, subordination, Hardy Space.}

\def\thefootnote{}
\footnotetext{ {\tiny File:~\jobname.tex,
printed: \number\year-\number\month-\number\day,
          \thehours.\ifnum\theminutes<10{0}\fi\theminutes }
} \makeatletter\def\thefootnote{\@arabic\c@footnote}\makeatother
\begin{abstract}
    In this artcle, we introduce and investigate a subclass of  Bazilevi{\v{c}} functions, denoted by $\mathcal{B}_{\varphi_{A,B}}(\alpha^{(m)})$. We determine the Hardy space to which this subclass of  Bazilevi{\v{c}} functions belong to. Additionally, we provide a necessary condition for a particular case of this subclass. Finally, we obtain a sharp coefficient estimate for the functions associated with $\mathcal{B}_1(\alpha).$ 
\end{abstract}

\thanks{}

\maketitle
\pagestyle{myheadings}
\markboth{Lokenath Thakur }{A note on a subclass of Bazilev\'c functions}
\section{Introduction}
Let $\mathcal{H}$ be the class of all analytic functions in the unit disk $\mathbb{D}=\{~z\in\mathbb{C}:|z|<1\}$ and let $\mathcal{A}$ denote the subclass of $\mathcal{H}$ with functions of the form 
\begin{align}\label{A-05}
f(z)=z+\sum_{n=2}^\infty a_n z^n.
\end{align}

If \( g(z) \in \mathcal{A} \) is starlike (with respect to the origin) in \(\mathbb{D}\), \( P(z)\in \mathcal{A} \) is regular with \( \text{Re } P(z) > 0 \) in \(\mathbb{D}\), \(\beta\) is any real number and \(\alpha > 0\), then

\begin{align}\label{A-10}
f(z) = \left[ (\alpha + i\beta) \int_0^z P(\zeta)g^{\alpha}(\zeta)\zeta^{i\beta-1}d\zeta \right]^{1/(\alpha+i\beta)}
\end{align}
has been shown by  Bazilevi{\v{c}} \cite{bazilevic-1955} to be a regular and univalent function in \(\mathbb{D}\). The powers appearing in the formula are meant to be principal values. We denote by \( \mathcal{B}(\alpha, \beta, \mathcal{P}, g) \) the class of functions defined by \eqref{A-10}. Let $\mathcal{S}$, $\mathcal{S}^*$, $\mathcal{K}$, $\mathcal{C}~~\text{and}~~ \mathcal{P}$ denote the subclasses of univalent, starlike, convex, close-to-convex and Caratheodory functions, respectively (For these classes, see \cite{Duren} for instance). It is well known that the inclusion relations $\mathcal{K} \subset \mathcal{S}^* \subset \mathcal{C} \subset  \mathcal{B}(\alpha, \beta, \mathcal{P}, g) \subset \mathcal{S}$ are valid. So far,  Bazilevi{\v{c}} functions form the largest subclass of $\mathcal{S}$. For $\beta=0$, the class $\mathcal{B} (\alpha, \beta, \mathcal{P}, g)$ is simply denoted by $\mathcal{B}(\alpha,\mathcal{P},g)$.\\

For analytic functions \( g \) and \( h \) in \( \mathbb{D} \), \( g \) is said to be subordinate to \( h \) if there exists an analytic function \( \omega \) such that  
\[
\omega(0) = 0, \quad |\omega(z)| < 1 \quad \text{and} \quad g(z) = h(\omega(z)) \quad (z \in \mathbb{D}).
\]  
This subordination will be denoted by \( g \prec h \) or, conventionally, by  
\[
g(z) \prec h(z).
\]  
In particular, when \( h \) is univalent in \( \mathbb{D} \), \( g \prec h \) if and only if  
\[
g(0) = h(0) \quad \text{and} \quad g(\mathbb{D}) \subset h(\mathbb{D}).
\] \\

Let $\mathcal{M}$ be the class of zero-free analytic functions $\varphi$ in $\mathbb{D}$ with the normalization condition $\varphi(0) = 1$. Then, following the earlier work by Ma and Minda \cite{ma}, Kim and Sugawa \cite{kim} introduced the subclasses $\mathcal{S}^*(\varphi)$ and $\mathcal{K}(\varphi)$ of $\mathcal{A}$ as the sets of functions $f \in \mathcal{A}$ satisfying, respectively, the following subordination conditions:

\[
\frac{zf'(z)}{f(z)} \prec \varphi(z)
\]

and

\[
1 + \frac{zf''(z)}{f'(z)} \prec \varphi(z),
\]

for each $\varphi \in \mathcal{M}$. By definition, it is clear that

\[
f \in \mathcal{K}(\varphi) \iff zf' \in \mathcal{S}^*(\varphi).
\]

We also note that

\[
\mathcal{S}^*(\varphi) \subset \mathcal{S}^*(\psi) \quad \text{and} \quad \mathcal{K}(\varphi) \subset \mathcal{K}(\psi) \quad (\varphi \prec \psi).
\]

Next, for $A$ and $B$ such that $-1 \leq B < A \leq 1$, let us define the Möbius transformation $\varphi_{A,B}$ by

\[
\varphi_{A,B}(z) = \frac{1 + Az}{1 + Bz}.
\] \\

Let \( m \) be a positive integer, \(\alpha_1, \ldots, \alpha_m \in (0, +\infty)\) and set \(\gamma = \alpha_1 + \cdots + \alpha_m + i\beta\). If \( h \in \mathcal{P} \) for some  and \( g_1, \ldots, g_m \in \mathcal{S}^* \), then Kim and Sugawa \cite{sugawa} proved that the function \( f \) defined by  
\begin{align}\label{A-15}
    f(z) = \left[ \gamma \int_0^z g_1^{\alpha_1}(\zeta) \cdots g_m^{\alpha_m}(\zeta) h(\zeta) \zeta^{i\beta-1} d\zeta \right]^{1/\gamma}, 
\end{align}
belongs to \(\mathcal{B}(\alpha_1 + \cdots + \alpha_m, \beta,P,g)\).\\

For $\varphi \in \mathcal{M}$, we denote by $\mathcal{B}_{\varphi_{A,B}}(\alpha^{(m)},\mathcal{P},g)$ the class of functions $f \in \mathcal{A}$ which satisfy \eqref{A-15}, where $g_1, g_2,\cdots,g_m\in \mathcal{S}^{*}(\varphi_{A,B})$ (see, full details, \cite{janowski-1,janowski-2}) and $\alpha^{(m)}=\alpha_1 + \cdots + \alpha_m$.  In particular, if we put

\[
\varphi(z) = \varphi_{1,-1}(z) = \frac{1+z}{1-z},
\]
we easily see that

\[
\mathcal{B}_{\varphi_{1,-1}}(\alpha^{(1)},\mathcal{P},g) = \mathcal{B}(\alpha,\mathcal{P},g).
\]
Throughout this paper, for the sake of brevity, we shall make use of the simplified notation $\mathcal{B}_{\varphi_{A,B}}(\alpha^{(m)})$ for the class $\mathcal{B}_{\varphi_{A,B}}(\alpha^{(m)},\mathcal{P},g)$.\\

Let $\mathcal{H}^p (0 < p \leq \infty)$ denote the Hardy space of analytic functions $f(z)$ in $\mathbb{D}$, and define the integral means by
\[
M_p(r,f) = 
\begin{cases} 
\left( \frac{1}{2\pi} \int_0^{2\pi} |f(re^{i\theta})|^p  d\theta \right)^{1/p} & (0 < p < \infty) \\
\max\limits_{|z| \leq r} |f(z)| & (p = \infty).
\end{cases}
\tag{1.4}
\]

Then, by definition, an analytic function $f(z)$ in $\mathbb{D}$ belongs to the Hardy space $\mathcal{H}^p (0 < p \leq \infty)$ if
\[
\|f\|_p := \lim_{r \to 1^-} M_p(r,f) < \infty.
\]

For a function $f$ in the class $\mathcal{B}(\alpha,\mathcal{P},g)$, Miller \cite{miller} investigated the Hardy spaces to which $f(z)$ and $f'(z)$ belong. In the present paper, when $\varphi(z) = \varphi_{A,B}(z)$, we carry out the corresponding investigations for the function class $\mathcal{B}_{\varphi_{A,B}}(\alpha^{(m)})$. We also provide a necessary condition for $\mathcal{B}_{\varphi_{A,B}}(\alpha^{(m)})$ when $m=1$ and $g(z)=z$ and we denote this subclass by $\mathcal{B}_{1}(\alpha).$

\section{The Hardy space for the class $\mathcal{B}_{\varphi_{A,B}}(\alpha^{(m)})$}

Eenigenburg and Keogh \cite{keogh} have investigated the Hardy classes to which \( f(z) \) and \( f'(z) \) belong for \( f \in \mathcal{K} = \mathcal{B}(1, 0, \mathcal{P}, g) \). In this paper we carry out a similar investigation for the wider class $\mathcal{B}_{\varphi_{A,B}}(\alpha^{(m)})$. In what follows, we denote by \( k_{\theta}(z) \) any function of the form \( z (1 - e^{-i\theta} z)^{-2} \), where \( \theta\) a real constant. We require the following lemmas.

\begin{lem}\cite{Littlewood}\label{lem-1}
If \( f(z) \) is univalent, then \( f(z) \in\mathcal{H}^\lambda \), for all \( \lambda < 1/2 \).
\end{lem}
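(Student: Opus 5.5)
We want to show that every univalent $f$ on $\mathbb{D}$ lies in $\mathcal{H}^\lambda$ for each $\lambda<1/2$. We may normalize: replacing $f$ by $(f-f(0))/f'(0)$ changes neither univalence nor membership in $\mathcal{H}^\lambda$, since $\mathcal{H}^\lambda$ is a vector space containing the constants. So we assume $f\in\mathcal{S}$. The strategy is to combine the growth theorem with a subordination-type estimate on integral means, using the square-root transform so that the Koebe-type bound becomes a statement about an odd univalent function.

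\emph{Step 1 (a pointwise bound is not enough).} The growth theorem gives
\[
|f(z)|\le \frac{|z|}{(1-|z|)^2},
\]
which alone does not control $M_\lambda(r,f)$: it only yields $M_\lambda(r,f)\le (1-r)^{-2}$. We therefore need an averaged estimate.

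\emph{Step 2 (Prawitz/Hayman-type inequality).} The main tool is the classical inequality
\[
M_\lambda(r,f)^\lambda \le \lambda\int_0^r \frac{M_\infty(t,f)^\lambda}{t}\,dt, \qquad 0<\lambda<\infty,
\]
valid for $f\in\mathcal{S}$. To prove it, write $I(r)=\frac1{2\pi}\int_0^{2\pi}|f(re^{i\theta})|^\lambda\,d\theta$. Since $f$ is zero-free away from the origin, $|f|^\lambda$ is subharmonic, and Green's formula gives
\[
r\,I'(r)=\frac{\lambda^2}{2\pi}\int_{|z|<r}|f|^{\lambda-2}|f'|^2\,dA.
\]
The right-hand side equals $\lambda/(2\pi)$ times $\lambda/2$ times the area, counted with multiplicity, of the image of $\{|z|<r\}$ under $f^{\lambda/2}$ on the slit surface. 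By univalence this image lies within a disk-like region of radius $M_\infty(r,f)^{\lambda/2}$ on a surface that covers each point once. This yields $rI'(r)\le \lambda M_\infty(r,f)^\lambda$, and integrating from $0$ to $r$ gives the inequality.

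\emph{Step 3 (conclude).} Insert the growth bound $M_\infty(t,f)\le t(1-t)^{-2}$ into the Step 2 inequality:
\[
M_\lambda(r,f)^\lambda\le \lambda\int_0^1 t^{\lambda-1}(1-t)^{-2\lambda}\,dt.
\]
This integral is finite precisely when $2\lambda<1$, i.e. $\lambda<1/2$. Hence $\sup_r M_\lambda(r,f)<\infty$, so $f\in\mathcal{H}^\lambda$.

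\emph{Main obstacle.} The delicate point is justifying the area estimate in Step 2 rigorously. The map $f^{\lambda/2}$ is multivalued, so one must work on a branch over the slit disk, where univalence ensures multiplicity one on each sheet-sector. The alternative is to pass to the odd function $\sqrt{f(z^2)}$, for which the single-valued computation is cleaner. Once that inequality is in hand, the rest is elementary. Since this is a classical result of Littlewood/Prawitz, in the paper it suffices to cite it as stated.
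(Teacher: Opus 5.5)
The paper gives no proof of this lemma; it is quoted from Littlewood, so there is no argument of the paper's to set yours against. Your route (Prawitz's inequality $M_\lambda(r,f)^\lambda\le\lambda\int_0^r t^{-1}M_\infty(t,f)^\lambda\,dt$ combined with the growth theorem) is the standard textbook proof, and it is correct. Compared with the bare citation, it makes the lemma self-contained and quantitative. It gives $\|f\|_\lambda^\lambda\le\lambda\int_0^1 t^{\lambda-1}(1-t)^{-2\lambda}\,dt=\lambda B(\lambda,1-2\lambda)$ uniformly for $f\in\mathcal{S}$ and $0<\lambda<1/2$, where $B$ is the Beta function.

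The one step to repair is the area bookkeeping in Step 2.

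\emph{The constant.} Since $|(f^{\lambda/2})'|^2=(\lambda/2)^2|f|^{\lambda-2}|f'|^2$, the right-hand side of your Green's formula identity equals $2/\pi$ times the area (with multiplicity) of the image under a branch of $f^{\lambda/2}$. It is not $\lambda^2/(4\pi)$ times that area.

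\emph{The region.} That area is at most $\frac{\pi\lambda}{2}M_\infty(r,f)^\lambda$, the area of a sector of opening $\pi\lambda$ and radius $M_\infty(r,f)^{\lambda/2}$. It is not bounded by a disk, and the branch need not be injective, so ``covers each point once'' is not automatic. With your stated constants, the bound $rI'(r)\le\lambda M_\infty(r,f)^\lambda$ does not follow as written.

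None of this machinery is needed. By univalence, change variables $w=f(z)$ directly:
\[
\int_{|z|<r}|f|^{\lambda-2}|f'|^2\,dA=\int_{f(\{|z|<r\})}|w|^{\lambda-2}\,dA(w)\le\int_{|w|<M_\infty(r,f)}|w|^{\lambda-2}\,dA(w)=\frac{2\pi}{\lambda}\,M_\infty(r,f)^{\lambda}.
\]
This gives $rI'(r)\le\lambda M_\infty(r,f)^\lambda$ at once. The integrand $|w|^{\lambda-2}$ is single-valued, so the ``main obstacle'' you identify (branches, slits, the odd square-root transform) never arises.

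The only remaining detail is the origin in Green's formula. Work on the annulus $\epsilon<|z|<r$. Since $|f(z)|^\lambda=|z|^\lambda|f(z)/z|^\lambda$ with $f(z)/z$ smooth and nonvanishing near $0$, both $I(\epsilon)$ and $\epsilon I'(\epsilon)$ are $O(\epsilon^\lambda)$. This also justifies integrating $I'$ from $0$.
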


\begin{lem}\label{lem-2}
If \( P(z) \) is regular and \( \operatorname{Re} P(z) > 0 \) in $\mathbb{D}$, then \( P(z) \in\mathcal{H}^\lambda \) for all \( \lambda < 1 \).
\end{lem}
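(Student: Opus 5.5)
The plan is to reduce the statement to a classical fact about Carathéodory functions by a normalization and a subordination argument. Since $\operatorname{Re}P>0$ in $\mathbb{D}$, the number $c=P(0)$ satisfies $\operatorname{Re}c>0$. Write $c=a+ib$ with $a>0$ and set
\[
p(z)=\frac{P(z)-ib}{a}.
\]
Then $p$ is regular in $\mathbb{D}$, $p(0)=1$ and $\operatorname{Re}p>0$. Because $P=ap+ib$ and constants lie in every $\mathcal{H}^\lambda$, it suffices to show $p\in\mathcal{H}^\lambda$ for all $\lambda<1$. For $\lambda<1$ we may use $|u+v|^\lambda\le C_\lambda(|u|^\lambda+|v|^\lambda)$ when $0<\lambda<1$; the case $\lambda\le 0$ is vacuous, since the Hardy spaces are indexed by $p>0$.

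Next, $p\prec q$, where
\[
q(z)=\frac{1+z}{1-z}=\varphi_{1,-1}(z),
\]
because $q$ is univalent, maps $\mathbb{D}$ onto the right half-plane, and $q(0)=1=p(0)$. By Littlewood's subordination theorem, $M_\lambda(r,p)\le M_\lambda(r,q)$ for every $\lambda>0$ and $0<r<1$. It therefore remains to bound $M_\lambda(r,q)$ uniformly in $r$.

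For this, note that $|1+z|\le 2$, so $|q(re^{i\theta})|\le 2/|1-re^{i\theta}|$. Moreover,
\[
|1-re^{i\theta}|\ge |\sin\theta| \quad\text{and}\quad |1-re^{i\theta}|\ge c|\theta| \quad\text{for } |\theta|\le\pi,
\]
with an absolute constant $c>0$. Hence $M_\lambda(r,q)^\lambda\le C\int_{-\pi}^{\pi}|\theta|^{-\lambda}\,d\theta$, which is finite exactly when $\lambda<1$. This gives a bound independent of $r$, so $q\in\mathcal{H}^\lambda$ and therefore $p,P\in\mathcal{H}^\lambda$.

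Alternatively, one can avoid the explicit integral. The function $q(z)=(1-z^2)/(1-z)^2$ is related to the Koebe function $k_0(z)=z(1-z)^{-2}$, and Lemma \ref{lem-1} applied to the univalent function $\bigl(\frac{1+z}{1-z}\bigr)^{1/2}$-type square roots gives the same conclusion. The direct estimate above is simpler, though.

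The only genuine step is the use of Littlewood's subordination principle together with the normalization at the origin. The main point to get right is that the subordination requires $p(0)=q(0)$, which is why the affine normalization comes first. The integral estimate is routine.
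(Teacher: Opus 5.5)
Your proof is correct. Note that the paper gives no proof of this lemma; unlike Lemmas~\ref{lem-1}, \ref{lem-3} and \ref{lem-4}, it does not even carry a citation. It is quoted as the classical fact that functions of positive real part lie in $\mathcal{H}^\lambda$ for every $\lambda<1$.

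The usual proof of that fact is shorter than yours and needs neither the normalization nor Littlewood's subordination theorem. For $0<\lambda<1$ the principal power $P^\lambda$ satisfies $|\arg P^\lambda|<\lambda\pi/2$, so $|P|^\lambda\le\sec(\lambda\pi/2)\operatorname{Re}P^\lambda$. Since $\operatorname{Re}P^\lambda$ is harmonic, $M_\lambda(r,P)^\lambda\le\sec(\lambda\pi/2)\operatorname{Re}\bigl(P(0)^\lambda\bigr)$ for every $r<1$, an explicit bound in one line.

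Your route instead reduces everything to the single extremal function $q=\varphi_{1,-1}$ via $p\prec q$. It then invokes Littlewood's theorem, a heavier tool that is itself proved through subharmonicity and harmonic majorants, and finally estimates $M_\lambda(r,q)$ by hand. Its merit is that it shows directly why $\lambda<1$ is the natural threshold, since $q\notin\mathcal{H}^1$.

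Two small points. First, your inequality $|1-re^{i\theta}|\ge c|\theta|$ on $|\theta|\le\pi$ does not follow from $|1-re^{i\theta}|\ge|\sin\theta|$, because $|\sin\theta|$ vanishes at $\theta=\pm\pi$. Handle the two ranges separately:
\begin{itemize}
\item on $\pi/2\le|\theta|\le\pi$, use $|1-re^{i\theta}|\ge\operatorname{Re}(1-re^{i\theta})\ge 1$;
\item on $|\theta|\le\pi/2$, use $|\sin\theta|\ge 2|\theta|/\pi$.
\end{itemize}
Together these give $c=1/\pi$.

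Second, your ``alternative'' paragraph is off. Applying Lemma~\ref{lem-1} to a square root of $q$ (which is univalent onto a sector) only yields $q\in\mathcal{H}^{\lambda}$ for $\lambda<1/4$. What works is to apply it to the square $q^2=1+4k_0$, which is univalent since it maps $\mathbb{D}$ onto $\mathbb{C}\setminus(-\infty,0]$. Then $q^2\in\mathcal{H}^{\mu}$ for all $\mu<1/2$, i.e.\ $q\in\mathcal{H}^{\lambda}$ for all $\lambda<1$. Combined with your subordination step, this gives the lemma without computing any integral.
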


\begin{lem}\cite{keogh}\label{lem-3}
If \( g \in \mathcal{S}^* \) and \( g(z) \neq k_{a,\tau}(z) \) then there exists \( \epsilon = \epsilon(g) > 0 \) such that \( g(z) \in\mathcal{H}^{1/2+\epsilon} \).
\end{lem}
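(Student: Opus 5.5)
The statement immediately above is Lemma~\ref{lem-3}, which is quoted from \cite{keogh}. Here $k_{a,\tau}$ is read as a rotation $k_\theta$ of the Koebe function. The plan is to reduce the claim to growth of the maximum modulus, via the representation of starlike functions. For $g\in\mathcal{S}^*$ we have $zg'(z)/g(z)=p(z)$ with $p\in\mathcal{P}$, and Herglotz gives $p(z)=\int_0^{2\pi}\frac{1+e^{-it}z}{1-e^{-it}z}\,d\mu(t)$ for a probability measure $\mu$. Integrating yields
\[
g(z)=z\exp\Bigl(-2\int_0^{2\pi}\log(1-e^{-it}z)\,d\mu(t)\Bigr),
\]
so $|g(z)|=|z|\prod$-type weighted geometric mean of $|1-e^{-it}z|^{-2}$. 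If $g\neq k_\theta$ for every $\theta$, then $\mu$ is not a single point mass. Let $\delta=\max_t \mu(\{t\})$; then $\delta<1$.

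The key step is to show $M_\infty(r,g)=O\bigl((1-r)^{-2\delta'}\bigr)$ for some $\delta'<1$. This is the classical result of Hayman and Pommerenke: for starlike $g$, $(1-r)^2M_\infty(r,g)\to\delta$-related limits, more precisely $\log M_\infty(r,g)\le (2\delta+o(1))\log\frac{1}{1-r}$ if the largest atom is $\delta$. To prove it directly, split $\mu=\mu_1+\mu_2$. Here $\mu_1$ collects the atoms of mass $\geq\eta$ (finitely many) and $\mu_2$ has all atoms smaller than $\eta$, with $\eta$ small. For $\mu_2$, near any boundary point the mass in an arc of length $h$ is at most $\eta+o(1)$ as $h\to0$. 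Bounding $\int\log|1-e^{-it}z|^{-1}d\mu_2$ by splitting the arcs $|t-\arg z|<h$ and the rest gives a contribution of at most $(\eta+o(1))\log\frac1{1-r}+C(h)$. The atoms in $\mu_1$ are separated, so at most one of them is close to $z$, contributing at most $\delta\log\frac{1}{1-r}$ plus a bounded amount. Altogether $M_\infty(r,g)\le C(1-r)^{-2(\delta+\eta)-o(1)}$, and choosing $\eta$ small gives an exponent $2\delta'<2$.

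The final step bounds the integral means. I would use the fact that for starlike $g$, $M_p(r,g)$ for $p>1/2$ is controlled via $M_\infty$, through the subordination-type estimate for univalent functions:
\[
M_p^p(r,g)\le C\int_0^r M_\infty(\rho,g)^{p}\,\rho^{-1}(1-\rho)^{-?}\dots
\]
More concretely, I would use Prawitz's inequality $\frac{d}{dr}\bigl(rM_p^p(r,g)\bigr)\cdot\ldots$, i.e. $M_p^p(r,g)\le p\int_0^r M_\infty(\rho,g)^p\rho^{-1}d\rho$. With $M_\infty(\rho)\le C(1-\rho)^{-2\delta'}$, the integral stays bounded as $r\to1$ whenever $2\delta' p<1$, i.e. $p<1/(2\delta')$. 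Since $\delta'<1$, we can take $p=1/2+\epsilon$ with $\epsilon>0$ small, and so $g\in\mathcal{H}^{1/2+\epsilon}$.

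The main obstacle I expect is the uniform maximum-modulus estimate in the key step, specifically controlling the diffuse part $\mu_2$ uniformly in $\arg z$. I would handle this with the uniform smallness of $\mu_2$ on short arcs, which follows by compactness from the absence of atoms larger than $\eta$ in $\mu_2$.
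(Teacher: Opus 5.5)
The paper does not prove this lemma: it is quoted from Eenigenburg and Keogh and used as a black box in the Hardy-space theorem. So there is no argument in the paper to compare yours with. Your proof is correct and self-contained, and it proves more than the lemma states. The chain runs as follows.
\begin{itemize}
\item The Herglotz representation gives $g(z)=z\exp\bigl(-2\int_0^{2\pi}\log(1-e^{-it}z)\,d\mu(t)\bigr)$.
\item $g$ is a Koebe rotation exactly when $\mu$ is a unit point mass, so the largest atom $\delta$ satisfies $\delta<1$.
\item The uniform arc estimate gives $M_\infty(r,g)=O\bigl((1-r)^{-2\delta'}\bigr)$ for every $\delta'>\delta$.
\item Prawitz's inequality then gives $g\in\mathcal{H}^p$ for all $p<1/(2\delta)$.
\end{itemize}
This is an explicit admissible $\epsilon(g)$, not just its existence.

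The write-up needs three small repairs.

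First, with the bound written as $M_\infty(\rho,g)\le C(1-\rho)^{-2\delta'}$, the integral $\int_0^r M_\infty(\rho,g)^p\rho^{-1}\,d\rho$ diverges at $\rho=0$. Keep the factor $|z|$ from the representation, i.e.\ $M_\infty(\rho,g)\le C\rho(1-\rho)^{-2\delta'}$, so the integrand is $O(\rho^{p-1})$ near $0$.

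Second, delete the display containing $(1-\rho)^{-?}$ and the garbled derivative expression. What you need is exactly Prawitz's inequality $M_p^p(r,g)\le p\int_0^r M_\infty(\rho,g)^p\rho^{-1}\,d\rho$, valid for any univalent $g$ with $g(0)=0$. It follows from $r\frac{d}{dr}M_p^p(r,g)=\frac{p^2}{2\pi}\iint_{|z|<r}|g|^{p-2}|g'|^2\,dA\le p\,M_\infty(r,g)^p$. The last step uses the change of variables $w=g(z)$ together with $g(\{|z|<r\})\subset\{|w|<M_\infty(r,g)\}$.

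Third, the splitting $\mu=\mu_1+\mu_2$ is unnecessary. Your compactness argument applied to $\mu$ itself gives $\sup_\phi\mu(\{|t-\phi|<h\})\to\delta$ as $h\to0$. We always have $\log\frac{1}{|1-re^{is}|}\le\log\frac{1}{1-r}$, and $\log\frac{1}{|1-re^{is}|}\le\log^+\frac{\pi}{2h}$ for $h\le|s|\le\pi$. Hence directly
\[
\log|g(re^{i\phi})|\le\log r+2\mu(\{|t-\phi|<h\})\log\tfrac{1}{1-r}+2\log^+\tfrac{\pi}{2h}.
\]

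Finally, the appeal to Hayman--Pommerenke limits of $(1-r)^2M_\infty(r,g)$ is loosely stated; for non-Koebe starlike $g$ that limit is simply $0$. You can drop it, since you prove the growth bound yourself.
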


\begin{lem}\cite{hardy}\label{lem-4}
If \( f'(z) \in\mathcal{H}^\lambda (0 < \lambda < 1) \), then \( f(z) \in\mathcal{H}^{\lambda/(1-\lambda)} \).
\end{lem}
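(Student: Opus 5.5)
The statement to be proved is Lemma~\ref{lem-4}: if $f'\in\mathcal{H}^\lambda$ with $0<\lambda<1$, then $f\in\mathcal{H}^{\lambda/(1-\lambda)}$. This is the classical Hardy--Littlewood theorem on fractional integration in Hardy spaces. I would prove it by bounding $f$ radially through a pointwise growth estimate for $f'$, and then using interpolation between this pointwise bound and the integral mean of $f'$.

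\textbf{Step 1 (growth of $f'$).} Since $f'\in\mathcal{H}^\lambda$, we have $|f'|^\lambda\le$ the Poisson integral of a bounded measure $\mu$. This holds because $|f'|^\lambda$ is subharmonic with bounded integral means, so it has a harmonic majorant. The majorant gives
\[
|f'(re^{i\theta})|\le C\,\|f'\|_\lambda\,(1-r)^{-1/\lambda},
\]
and, more usefully, $|f'(\rho e^{i\theta})|^\lambda\le P[\mu](\rho e^{i\theta})$ along each ray.

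\textbf{Step 2 (radial integration with splitting).} Write
\[
|f(re^{i\theta})-f(0)|\le\int_0^r|f'(\rho e^{i\theta})|\,d\rho .
\]
Set $F(\theta)=\sup_{\rho<1}|f'(\rho e^{i\theta})|(1-\rho)^{1/\lambda-1}$, and also use the radial maximal function $f'^*(\theta)=\sup_\rho|f'(\rho e^{i\theta})|$, which lies in $L^\lambda$ by the Hardy--Littlewood maximal theorem for $\mathcal{H}^\lambda$. For a cutoff $\delta$, split the integral at $1-\delta$:
\begin{itemize}
\item the part near the boundary is at most $\delta\,f'^*(\theta)$;
\item the part away from the boundary is at most $C\,\delta^{-(1/\lambda-1)}$ times a controlled quantity.
\end{itemize}
Optimising $\delta$ pointwise in $\theta$ gives a bound of the shape
\[
|f(re^{i\theta})|\lesssim f'^*(\theta)^{1-\lambda}\cdot(\text{something in }L^{\infty}\text{ or }L^{\lambda}).
\]
The exponents are designed so that $p=\lambda/(1-\lambda)$ satisfies $p(1-\lambda)=\lambda$.

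\textbf{Step 3 (the cleaner route).} Rather than the split above, I would use the known Hardy--Littlewood lemma: if $\phi\ge0$ is decreasing-type along rays with $\int_0^1\phi^\lambda\,d\rho$-control, then $\big(\int_0^1\phi\,d\rho\big)^\lambda$ is bounded by the corresponding mean. Concretely:
\begin{enumerate}
\item Bound $\int_0^1|f'(\rho e^{i\theta})|\,d\rho\le f'^*(\theta)^{1-\lambda}\int_0^1|f'(\rho e^{i\theta})|^\lambda\,d\rho$.
\item Raise to the power $p=\lambda/(1-\lambda)$ and integrate in $\theta$.
\item Apply H\"older with exponents $1/(1-\lambda)$ and $1/\lambda$.
\end{enumerate}
After H\"older, the first factor is $\int f'^{*\lambda}<\infty$. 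The second factor is $\int\big(\int_0^1|f'|^\lambda d\rho\big)^{p/\lambda}\,d\theta$, and I must show it is finite.

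\textbf{Main obstacle.} The hard step is controlling this second factor, which amounts to showing
\[
\int_0^{2\pi}\Big(\int_0^1|f'(\rho e^{i\theta})|^\lambda d\rho\Big)^{1/(1-\lambda)}d\theta<\infty .
\]
The integral $\int_0^1|f'|^\lambda d\rho$ is $\lesssim\int_0^1 P[\mu](\rho e^{i\theta})\,d\rho$, which behaves like a conjugate-type (Hilbert) integral of $\mu$ and is only weak-type $L^1$. That is insufficient when $1/(1-\lambda)>1$.

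So I would instead first reduce, by factorisation $f'=Bh$ with $B$ Blaschke and $h$ zero-free, to the case $f'=h$ with $h^{\lambda/2}\in\mathcal{H}^2$. Then I would work with the $\mathcal{H}^2$ function $g=h^{\lambda/2}$ and use Fej\'er--Riesz-type and coefficient estimates (Hardy's inequality) to control the fractional integral. If this reduction is awkward, I would fall back on citing the fractional-integration theorem of Hardy--Littlewood, exactly as the paper does via \cite{hardy}.
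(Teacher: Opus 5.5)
The paper gives no argument for this lemma; it simply quotes it from Hardy--Littlewood, so your closing fallback (cite the theorem) is exactly what the paper does. As a self-contained proof, though, your proposal never closes. The H\"older step in Step~3 is mis-set, and the ``main obstacle'' you identify is not real. Both of your own routes finish in a few lines.

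\textbf{Step 2 is already complete.} The ``controlled quantity'' is just the constant from Step~1. Write $M(\theta)=\sup_{0\le\rho<1}|f'(\rho e^{i\theta})|$ (your $f'^{*}$) and $C=2^{1/\lambda}\|f'\|_\lambda$. For $0<\delta\le 1$,
\[
\int_0^1|f'(\rho e^{i\theta})|\,d\rho\le C\int_0^{1-\delta}(1-\rho)^{-1/\lambda}\,d\rho+\delta M(\theta)\le\frac{C\lambda}{1-\lambda}\,\delta^{-(1-\lambda)/\lambda}+\delta M(\theta).
\]
Choose $\delta=M(\theta)^{-\lambda}$ when $M(\theta)\ge 1$, and $\delta=1$ otherwise. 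This gives $|f(re^{i\theta})|\le |f(0)|+C_1\bigl(1+M(\theta)^{1-\lambda}\bigr)$ for every $r<1$. Hence $|f(re^{i\theta})|^{\lambda/(1-\lambda)}\le C_2\bigl(1+M(\theta)^{\lambda}\bigr)$, which is integrable in $\theta$ uniformly in $r$ because $M\in L^\lambda$. For $\lambda<1$, that last fact goes through the factorization $f'=Bg$: one has $M\le\bigl((g^{\lambda/2})^{*}\bigr)^{2/\lambda}$ with $g^{\lambda/2}\in\mathcal{H}^2$, and the $L^2$ maximal theorem applies.

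\textbf{Step 3 also closes, but not as you set it up.} With H\"older exponents $1/(1-\lambda)$ and $1/\lambda$, the first factor is $\bigl(\int M^{\lambda/(1-\lambda)}\,d\theta\bigr)^{1-\lambda}$, not $\int M^{\lambda}\,d\theta$. Since $\lambda/(1-\lambda)>\lambda$, this is not controlled by $f'\in\mathcal{H}^\lambda$. What you actually need is that the second factor is bounded uniformly in $\theta$, and it is. The Fej\'er--Riesz inequality, applied to the rotated function or to $g^{\lambda/2}\in\mathcal{H}^2$ as in your last paragraph, gives
\[
\int_0^1|f'(\rho e^{i\theta})|^\lambda\,d\rho\le\frac12\int_0^{2\pi}|f'(e^{it})|^\lambda\,dt\quad\text{for every }\theta .
\]
Hence $|f(re^{i\theta})-f(0)|\le \pi\|f'\|_\lambda^\lambda\,M(\theta)^{1-\lambda}$, and the same conclusion follows with no H\"older step at all.

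Your weak-$L^1$ diagnosis came from replacing $|f'|^\lambda$ by its Poisson majorant, which is lossy. Even that majorant is not Hilbert-type: the radial integral of $\frac{1-\rho^2}{1-2\rho\cos\phi+\rho^2}$ equals $2\log(1/|\phi|)+O(1)$, a logarithmic kernel lying in every $L^q$ with $q<\infty$.
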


\begin{thm}
    If \( \mathcal{B}_{\varphi_{A,B}}(\alpha^{(m)}) \) with \( g(z) \neq k_{\theta}(z) \) then there exists \( \epsilon = \epsilon(f) > 0 \) such that \( f(z) \in\mathcal{H}^{1/2m+\epsilon} \).
\end{thm}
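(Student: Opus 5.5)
We follow Miller's argument for $\mathcal{B}(\alpha,\mathcal{P},g)$, adapted to the product in \eqref{A-15}. Let $f\in\mathcal{B}_{\varphi_{A,B}}(\alpha^{(m)})$ with $\gamma=\alpha_1+\cdots+\alpha_m+i\beta$ (in the real case $\beta=0$). Set $F=f^{\gamma}$. Differentiating \eqref{A-15} gives
\[
F'(z)=\gamma\, g_1^{\alpha_1}(z)\cdots g_m^{\alpha_m}(z)\,h(z)\,z^{i\beta-1}.
\]
The plan has three steps. First, obtain an integrability exponent for $F'$ by H\"older's inequality. Second, pass from $F'$ to $F$ with Lemma \ref{lem-4}. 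Third, pass from $F=f^{\gamma}$ to $f$.

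For the first step, each $g_j\in\mathcal{S}^*(\varphi_{A,B})\subset\mathcal{S}^*$. When $g_j$ is not a rotated Koebe function, Lemma \ref{lem-3} gives $g_j\in\mathcal{H}^{1/2+\epsilon_j}$, so $g_j^{\alpha_j}\in\mathcal{H}^{(1/2+\epsilon_j)/\alpha_j}$. If $B>-1$, then $\varphi_{A,B}(\mathbb{D})$ is a disk compactly inside the right half-plane. In that case no Koebe function occurs, and the growth is in fact $|g_j(z)|=O((1-|z|)^{-(A-B)/(1-B)})$, which is better still. By Lemma \ref{lem-2}, $h\in\mathcal{H}^{\lambda}$ for every $\lambda<1$. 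The factor $z^{i\beta-1}$ is harmless: $|z^{i\beta}|$ is bounded, and the pole $z^{-1}$ is cancelled because each $g_j(z)/z$ is zero-free. Write $\frac1q=\sum_j \frac{\alpha_j}{1/2+\epsilon_j}+\frac1\lambda$. The generalized H\"older inequality applied to the integral means $M_q(r,\cdot)$ then gives $F'\in\mathcal{H}^{q}$.

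For the second step, choose $\lambda$ close to $1$. If $q<1$, Lemma \ref{lem-4} gives $F\in\mathcal{H}^{q/(1-q)}$; if $q\ge1$, then $F$ is bounded. For the third step, write $|f|^{p}=|F|^{p/\alpha^{(m)}}$ up to bounded factors coming from $\arg F$. Then $f\in\mathcal{H}^{\alpha^{(m)} q/(1-q)}$.

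The main obstacle is the bookkeeping that turns this exponent into the form $1/(2m)+\epsilon$. Each Koebe-free starlike factor lies in $\mathcal{H}^{1/2+\epsilon_j}$. When all $\alpha_j\le1$, or after normalizing, splitting the H\"older exponent equally among the $m$ factors yields an exponent of at least $\frac{1}{2m}$ plus a positive excess, and $\epsilon(f)=\min_j\epsilon_j$-type contributions keep this excess strictly positive. I expect the hypothesis ``$g\neq k_\theta$'' must be read as holding for every $g_j$. Without that, one can take $h\equiv1$ and all $g_j=k_\theta$, and the exponent $\frac{1}{2m}$ is attained exactly, so no $\epsilon$ is possible.

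As a fallback, if the H\"older arithmetic does not directly produce $\frac1{2m}+\epsilon$, I would use the univalence of $f$ instead. By Lemma \ref{lem-1}, $f\in\mathcal{H}^{\lambda}$ for all $\lambda<1/2$, and $1/2\ge 1/(2m)$. The task then reduces to showing strict improvement via Lemma \ref{lem-3}. I expect that controlling this strict gain uniformly through the $m$-fold product is the delicate point.
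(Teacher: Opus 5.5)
The step that fails as written is applying Lemma~\ref{lem-4} to $F=f^{\gamma}$. Near the origin $F(z)$ behaves like $z^{\gamma}$. So unless $\gamma$ is a positive integer, neither $F$ nor $F'(z)=\gamma z^{\gamma-1}\prod_j (g_j(z)/z)^{\alpha_j}h(z)$ is a single-valued analytic function in $\mathbb{D}$. Your remark that the pole $z^{-1}$ is ``cancelled'' only shows that $|F'|$ is bounded near $0$. That controls integral means, but it does not put $F'$ in $\mathcal{H}^{q}$, and Lemma~\ref{lem-4} is a statement about analytic functions whose proof uses analyticity on the whole disc.

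This is why the paper, following Miller, uses $F=(f/z)^{\alpha}$ with $\alpha=\alpha_1+\cdots+\alpha_m$, which is analytic because $f(z)/z\neq 0$. The price is an extra term: $F'=\alpha z^{-1}\prod_j (g_j/z)^{\alpha_j}h-\alpha F/z$. After $|a+b|^{\lambda}\le|a|^{\lambda}+|b|^{\lambda}$, the second piece is controlled by Lemma~\ref{lem-1} applied to the univalent $f$, under the constraint $\alpha\lambda<1/2$. Only the first piece goes through H\"older. Your H\"older estimate fits this scheme once you shrink the $\epsilon_j$ so that $1/q>2\alpha$. So what is missing is exactly this analytic normalization together with Lemma~\ref{lem-1} for the error term. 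Separately, for $\beta\neq0$ your claim that $|f^{\gamma}|$ and $|f|^{\alpha}$ differ by bounded factors is unjustified, since $|f^{\gamma}|=|f|^{\alpha}e^{-\beta\arg f}$ and $\arg(f(z)/z)$ need not be bounded; the paper tacitly takes $\beta=0$.

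You also never finish the exponent count, and the obstacle you anticipate is not there. Put $\epsilon_0=\min_j\epsilon_j$. Then $\sum_j\alpha_j/(1/2+\epsilon_j)\le 2\alpha/(1+2\epsilon_0)$, so as $\lambda\to1^-$ the exponent $\alpha q/(1-q)=\alpha/(1/q-1)$ tends to a number $\ge\frac12+\epsilon_0$. Once repaired, your argument therefore gives $f\in\mathcal{H}^{1/2+\epsilon}$ for every $m$, which is stronger than the statement. The paper only reaches $1/(2m)$ because its H\"older step uses $\int|g_i|^{\alpha\lambda p_i}$ instead of $\int|g_i|^{\alpha_i\lambda p_i}$, which loses a factor $m$.

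Your fallback is also stronger than you realize. For $m\ge2$ we have $1/(2m)<1/2$ strictly, so Lemma~\ref{lem-1} alone gives $f\in\mathcal{H}^{1/(2m)+\epsilon}$ for any $0<\epsilon<\frac12-\frac1{2m}$. This uses neither Lemma~\ref{lem-3} nor the hypothesis on the $g_j$; only $m=1$ genuinely needs the H\"older argument.

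Finally, your sharpness claim is wrong. With all $g_j=k_\theta$, the extremal choice is $h=zk_\theta'/k_\theta$, not $h\equiv1$; the latter yields $f\in\mathcal{H}^{p}$ for some $p>1/2$. The extremal choice gives $f=k_\theta$, whose critical exponent is $1/2$, not $1/(2m)$. So the hypothesis $g\neq k_\theta$ matters only when $m=1$.
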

\begin{proof}
    Let $f(z)\in  \mathcal{B}_{\varphi_{A,B}}(\alpha^{(m)})$. From \eqref{A-15} we have

$$
f(z) = \left[ \alpha \int_0^z g_1^{\alpha_1}(\zeta) \cdots g_m^{\alpha_m}(\zeta) h(\zeta) \zeta^{i\beta-1} d\zeta \right]^{1/\alpha}
$$

 where $g_1, g_2,\cdots,g_m\in S^{*}(\varphi_{A,B})$, $h\in \mathcal{P}$ and $\alpha=\alpha_1 + \cdots + \alpha_m$.\\

 or
 \begin{align}\label{A-20}
      f'(z)=f^{1-\alpha}(z)g_1^{\alpha_1}(z) \cdots g_m^{\alpha_m}(z) h(z) z^{-1}
 \end{align}

If we let $F(z)=\left(\frac{f(z)}{z} \right)^{\alpha},$ then $F(z)$ is regular in $\mathbb{D}$ and satisfies 
$$
F'(z)=\frac{\alpha f^{1-\alpha}(z)f'(z)}{z^\alpha}-\frac{\alpha F(z)}{z}.
$$
Using \eqref{A-20} this becomes,
$$
F'(z)=\frac{\alpha g_1^{\alpha_1}(z) \cdots g_m^{\alpha_m}(z) h(z)}{z^{\alpha+1}}-\frac{\alpha F(z)}{z},
$$
and if $0\le\lambda\le1$, then for $ z=re^{i\theta}(0<r<1)$ we obtain 

\[
I(r) = \int_{0}^{2\pi} \left| F'(z) \right|^\lambda d\theta \leq \int_{0}^{2\pi} \left| \alpha \frac{ g_1^{\alpha_1}(z) \cdots g_m^{\alpha_m}(z)}{z^{\alpha+1}} P(z) \right|^{\lambda} d\theta
\]
\begin{align}\label{A-25}
    + \int_{0}^{2\pi} \left| \frac{\alpha F(z)}{z} \right|^{\lambda} d\theta \equiv I_1(r) + I_2(r).
\end{align}
Now
\[
I_2(r) = \int_{0}^{2\pi} \left| \frac{\alpha F(z)}{z} \right|^{\lambda} d\theta = \int_{0}^{2\pi} \left| \frac{\alpha}{z} \left[ \frac{f(z)}{z} \right]^{\alpha} \right|^{\lambda} d\theta
\]
\[
= \frac{\alpha^{\lambda}}{r^{(\alpha+1)\lambda}} \int_{0}^{2\pi} \left| f(z) \right|^{\alpha \lambda} d\theta,
\]
and since \( f(z) \) is univalent, by Lemma \ref{lem-1}, \(\lim_{r \to 1^{-}} I_2(r)\) exists provided that
\begin{align}\label{A-30}
    \alpha \lambda < 1/2.
\end{align}
To calculate \( I_1(r) \), we apply H\"older's inequality with conjugate indices $p_1,p_2,\cdots,p_{m+1}.$ We have

\begin{align*}
  \frac{r^{(\alpha+1)\lambda}}{\alpha^\lambda} I_1(r) &= \int_0^{2\pi} |g_1(z)|^{\alpha_1\lambda}\cdots|g_m(z)|^{\alpha_m\lambda}|P(z)|^{\lambda} d\theta\\
&\leq \left[ \int_0^{2\pi} |g_1(z)|^{\alpha\lambda p_1} d\theta \right]^{\frac{1}{p_{1}}} \cdots \left[ \int_0^{2\pi} |g_m(z)|^{\alpha\lambda p_m} d\theta \right]^{\frac{1}{p_{m}}} \left[ \int_0^{2\pi} |P(z)|^{\lambda p_{m+1}} d\theta \right]^{\frac{1}{p_{m+1}}} \\
&= J_1(r) \cdots J_{m+1}(r).  
\end{align*}
By Lemma \ref{lem-3}, $\lim_{r \to 1^{-1}}J_i$ exists if $\alpha \lambda p_i \le 1/2 +\epsilon(g_i)=1/2 +\epsilon_i$ for $i=1,2,\cdots,m$. Again by Lemma \ref{lem-2}, $\lim_{r \to 1^{-1}}J_{m+1}$ exists if $\lambda p_{m+1}<1.$ Therefore, we see that $\lim_{r \to 1^{-1}}I_r$ exists provided that $\lambda<1/2\alpha$, $2\alpha \lambda/(1+2\epsilon)\le 1/p_i$ for $ i=1,2,\cdots,m$ and $\lambda<1/p_{m+1}$, where $\sum_{i=1}^{m+1} 1/p_i =1$ and $p_i>1.$ Let $\epsilon^*= \max\{\epsilon_1, \epsilon_2,\cdots, \epsilon_m\}$.\\

Therefore, $2\alpha \lambda/(1+2\epsilon^*)\le 1/p_i$ for all $i=1,2,\cdots,m$ and $\lambda<1/p_{m+1}$. This system of inequalities gives $\lambda\le (1+2\epsilon^*)/(2\alpha m+1+2\epsilon^*)$, where $\epsilon^*$ is chosen sufficiently small so that $(1+2\epsilon^*)/(2\alpha m+1+2\epsilon^*)<1/2\alpha.$\\

Hence, we have shown that there exists $\epsilon>0$ such that $\lim_{r \to 1^{-1}}I_r$ exists if $\lambda\le(1+\epsilon)/(2\alpha m+1+\epsilon)$, i.e.
\begin{align}\label{A-35}
    F'(z) \in \mathcal{H}^{(1+\epsilon)/(2\alpha m+1+\epsilon).}
\end{align}

For $\lambda=( 1+\epsilon)/(2\alpha m+1+\epsilon),$ we have $0<\lambda<1$ and $\lambda/(1-\lambda)=1/2\alpha m$. By \eqref{A-35}
 and Lemma \ref{lem-4}, we conclude that 
 \begin{align}\label{A-40}
     F(z)\in \mathcal{H}^{(1+\epsilon)/2\alpha m.}
 \end{align}
Since \( f(z) = z[F(z)]^{1/\alpha} \), we have
\[
\int_0^{2\pi} |f(z)|^\lambda  d\theta = r^\lambda \int_0^{2\pi} |F(z)|^{\lambda/\alpha}  d\theta,
\]
and consequently from \eqref{A-40} we see that this integral will be bounded provided that \(\lambda/\alpha \leq (1+\epsilon)/2\alpha m \), i.e. \(\lambda \leq 1/2m + \epsilon/2\) i.e. \(f \in \mathcal{H}^{1/2m+\epsilon}\), and this completes the proof of the theorem.

The condition \(g(z) \neq k_{\theta}(z)\) in the theorem is essential. This may be seen by taking \(g(z) = k_{\theta}(z)\) and \(h(z) = zk'_{\theta}(z)/k_{\theta}(z)\). A simple calculation shows that for \(f(z) \in \mathcal{B}(\alpha, 0, zk'_{\theta}/k_{\theta}, k_{\theta})\) we have \(f(z) = k_{\theta}(z)\). Since
\[
\int_0^{2\pi} |k_{\theta}(z)|^{1/2}  d\theta = r^{1/2} \int_0^{2\pi} \frac{d\theta}{|1 - re^{i\theta}|} \geq \sqrt{2}r^{1/2} \log \frac{1}{1-r},
\]
we see that \(k_{\theta}(z) \notin \mathcal{H}^{1/2}\).

\end{proof}
\begin{cor}
    If $f \in \mathcal{B}_1(\alpha)$, with $g(z) \neq k_{\theta}(z)$, then there exists $\epsilon = \epsilon(f) > 0$ such that
\[
f(z) \in \mathcal{H}^{\frac{1}{2} + \epsilon}.
\]
\end{cor}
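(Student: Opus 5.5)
The corollary is the special case $m=1$ of the preceding theorem, so the plan is to specialize that argument rather than invoke it blindly. For $f\in\mathcal{B}_1(\alpha)$ the representation \eqref{A-15} has a single factor $g_1=g\in\mathcal{S}^*(\varphi_{A,B})\subset\mathcal{S}^*$, exponent $\alpha_1=\alpha$, and $h\in\mathcal{P}$. We read the hypothesis $g\neq k_\theta$ as the single-factor analogue of the theorem's hypothesis, and in the degenerate normalization $g(z)=z$ it is automatically satisfied. Hence Lemma \ref{lem-3} applies to $g$ and yields $\epsilon_1=\epsilon(g)>0$ with $g\in\mathcal{H}^{1/2+\epsilon_1}$.

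Next I would set $F(z)=(f(z)/z)^{\alpha}$ and use \eqref{A-20} to write
\[
F'(z)=\frac{\alpha g^{\alpha}(z)h(z)}{z^{\alpha+1}}-\frac{\alpha F(z)}{z}.
\]
As in \eqref{A-25}, I would split $\int_0^{2\pi}|F'|^\lambda\,d\theta\le I_1(r)+I_2(r)$ for $0<\lambda\le1$. The term $I_2$ is controlled by Lemma \ref{lem-1} once $\alpha\lambda<1/2$. For $I_1$ I would use H\"older with only two conjugate exponents $p_1,p_2$: the $g$-factor is bounded when $\alpha\lambda p_1\le 1/2+\epsilon_1$ (Lemma \ref{lem-3}), and the $h$-factor when $\lambda p_2<1$ (Lemma \ref{lem-2}). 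Since $1/p_1+1/p_2=1$, these constraints are compatible exactly when
\[
\lambda<\frac{1+2\epsilon_1}{2\alpha+1+2\epsilon_1}.
\]
Shrinking $\epsilon_1$ if necessary keeps this below $1/(2\alpha)$, so some $\lambda$ slightly larger than $1/(2\alpha+1)$ works. Thus $F'\in\mathcal{H}^\lambda$ with $\lambda/(1-\lambda)>1/(2\alpha)$.

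Lemma \ref{lem-4} then gives $F\in\mathcal{H}^{(1+\epsilon')/(2\alpha)}$ for some $\epsilon'>0$. Finally, $|f|^{\mu}=r^\mu|F|^{\mu/\alpha}$, so $f\in\mathcal{H}^{\mu}$ whenever $\mu/\alpha\le(1+\epsilon')/(2\alpha)$, that is, $f\in\mathcal{H}^{1/2+\epsilon'/2}$. Renaming $\epsilon$ completes the proof.

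The only delicate step is the bookkeeping in the H\"older exponents. One must verify that the admissible range for $\lambda$ strictly exceeds $1/(2\alpha+1)$, since equality would only reproduce the $\mathcal{H}^{1/2}$ bound. This strict gain is precisely what the $\epsilon(g)$ from Lemma \ref{lem-3} supplies, so I would check that inequality explicitly rather than quote the theorem with $m=1$.
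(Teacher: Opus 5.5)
Your proposal is correct and follows the paper's route. The corollary is the $m=1$ case of the preceding theorem, and you rerun that proof: the auxiliary function $F=(f/z)^{\alpha}$, the splitting $I_1+I_2$, H\"older with two exponents via Lemmas \ref{lem-2} and \ref{lem-3}, then Lemma \ref{lem-4} and $f=zF^{1/\alpha}$. You also note correctly that $g(z)=z$ makes the hypothesis $g\neq k_\theta$ automatic. Your explicit check that the admissible range is the strict inequality $\lambda<(1+2\epsilon_1)/(2\alpha+1+2\epsilon_1)$, which still exceeds $1/(2\alpha+1)$, is exactly what yields $\lambda/(1-\lambda)>1/(2\alpha)$ and hence the gain over $\mathcal{H}^{1/2}$.
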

\section{Necessary condition for the class $\mathcal{B}_1(\alpha)$}

Let $f(z)\in \mathcal{A}$ be given by \eqref{A-05}. We say that $f \in \mathcal{C}(\beta)$ if and only if there exists a $\phi \in \mathcal{K}$ such that, 
$$
\left | \arg \frac{f'(z)}{\phi'(z)} \right|\le \frac{\beta \pi}{2} \quad (0<\beta <1)
$$
or equivalently,
$$
f'(z)=p^{\beta}(z)\phi'(z) \quad \textrm{where} \quad p(z)\in \mathcal{P}
$$
Now for $\phi(z)=z$ the class $ \mathcal{C}(\beta)$ reduces to a subclass, namely, $\mathcal{C}_I(\beta).$ We first notice that there exists a one-to-one corresponding between the classes $\mathcal{C}_I(\beta)$ and $ \mathcal{B}_1(\alpha)$. Let $g(z)\in \mathcal{B}_1(\alpha) ~~~~(\alpha>1)$ then there exists a function $h(z) \in \mathcal{P}$ such that 
\begin{align*}
    g(z)= \left[ \alpha \int_0^z t^{\alpha-1} h(t) dt \right]^{1/\alpha}.
\end{align*}
Now, consider the function $G(z)=\int_0^z (\frac{g(t)}{t})^{1-\frac{1}{\alpha }}(g'(t))^{\frac{1}{\alpha}}dt $. Then,
$$
G'(z)=\left(\left(\frac{g(z)}{z}\right)^{\alpha-1}g'(z)\right )^{\frac{1}{\alpha}}=h^{\frac{1}{\alpha}}(z),
$$
which indicates that $G(z)\in \mathcal{C}_I(\beta)$. 

Now let $f(z)\in \mathcal{C}_I(\beta)$, then there exists a function $p(z)\in \mathcal{P}$ such that $f'(z)=p^\beta(z)$. Now consider the function
$$
F(z)=\left[\frac{1}{\beta} \int_0^z\left(t f'(t) \right)^{\frac{1}{\beta}}t^{-1}dt\right]^{\alpha}=\left[\frac{1}{\beta} \int_0^z\left(t\right)^{\frac{1}{\beta}-1}f'^{\frac{1}{\beta}}(t)dt\right]^{\beta}=\left[\frac{1}{\beta} \int_0^z\left(t\right)^{\frac{1}{\beta}-1}p(z)dt\right]^{\beta},
$$
which indicates that $F(z)\in \mathcal{B}_1(\frac{1}{\beta})$. Therefore, we have been able to a make bridge between the classes $\mathcal{B}_1(\alpha)$ and $\mathcal{C}_I(\beta)$ when $\alpha>1 ~~\textrm{and} ~~ 0<\beta<1$. In the following theorem, we provide an important necessary condition for $\mathcal{B}_1(\alpha)$ when $\alpha>1.$ In order to state our result, we also introduce the notation  
\[P[\alpha, f](z) = 1 + \frac{zf''(z)}{f'(z)} + (\alpha - 1)\frac{zf'(z)}{f(z)}\]
for \(\alpha>1\) and \(f \in \mathcal{A}\).

\begin{thm}
If $f(z)\in \mathcal{B}_1(\alpha)$, then for $\alpha>1$ we get
$$
\int_{\theta_1}^{\theta_2}\operatorname{Re}\left(1 + \frac{zf''(z)}{f'(z)}\right) + (\alpha - 1)\operatorname{Re}\left(\frac{zf'(z)}{f(z)}\right)d\theta >-\pi ~~i.e.~~ \int_{\theta_1}^{\theta_2}\operatorname{Re}P[\alpha, f](z)d\theta >-\pi.
$$
\end{thm}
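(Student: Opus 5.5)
Fix $r\in(0,1)$ and $0\le\theta_1<\theta_2\le 2\pi$, and put $z=re^{i\theta}$. The plan is to write $P[\alpha,f](z)$ as a $\theta$-derivative of an argument, so that the integral becomes a difference of arguments. The class $\mathcal{B}_1(\alpha)$ gives exactly the structure needed for this.

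\textbf{Step 1: an identity for $P[\alpha,f]$.} For $f\in\mathcal{B}_1(\alpha)$ we have $f(z)=\bigl[\alpha\int_0^z t^{\alpha-1}h(t)\,dt\bigr]^{1/\alpha}$ with $h\in\mathcal{P}$. Differentiating $f^\alpha$ gives
\[
f^{\alpha-1}(z)f'(z)=z^{\alpha-1}h(z),\qquad\text{i.e.}\qquad \Bigl(\frac{f(z)}{z}\Bigr)^{\alpha-1}f'(z)=h(z).
\]
Take the logarithmic derivative and multiply by $z$:
\[
(\alpha-1)\Bigl(\frac{zf'(z)}{f(z)}-1\Bigr)+\frac{zf''(z)}{f'(z)}=\frac{zh'(z)}{h(z)}.
\]
This can be rewritten as
\[
P[\alpha,f](z)=\alpha+\frac{zh'(z)}{h(z)}.
\]

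\textbf{Step 2: integrate in $\theta$.} On $|z|=r$ we have $\frac{\partial}{\partial\theta}\arg h(re^{i\theta})=\operatorname{Re}\bigl(zh'(z)/h(z)\bigr)$. This is legitimate because $h$ is zero-free, having positive real part. Hence
\[
\int_{\theta_1}^{\theta_2}\operatorname{Re}P[\alpha,f](re^{i\theta})\,d\theta=\alpha(\theta_2-\theta_1)+\arg h(re^{i\theta_2})-\arg h(re^{i\theta_1}).
\]
Here $\arg h$ is the continuous branch with values in $(-\pi/2,\pi/2)$, which is a single global branch since $\operatorname{Re}h>0$.

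\textbf{Step 3: bound the argument difference.} Each of the two values of $\arg h$ lies in $(-\pi/2,\pi/2)$, so their difference is strictly greater than $-\pi$. Since $\alpha(\theta_2-\theta_1)\ge 0$, the right side is $>-\pi$, which is the claim.

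This is essentially the Kaplan-type argument used for close-to-convex functions, with $\alpha(\theta_2-\theta_1)$ in place of the term coming from the convex comparison function. The main point needing care is Step 2. I need $f'\neq0$ and $f\neq 0$ for $z\neq0$ so that $P[\alpha,f]$ is defined; this follows from univalence of Bazilevič functions. I also need the branch of $(f/z)^{\alpha-1}$ to be the principal one, so that the logarithmic differentiation is valid; this holds because $f(z)/z$ is zero-free with value $1$ at the origin. Strictness of the inequality comes from $h$ mapping into the open half-plane.
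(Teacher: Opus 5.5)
Your proof is correct. It runs on the same mechanism as the paper's proof, with the auxiliary function removed.

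The paper's route:
\begin{itemize}
\item It sets $F'=\bigl((f/z)^{\alpha-1}f'\bigr)^{1/\alpha}=h^{1/\alpha}$, so that $F\in\mathcal{C}_I(1/\alpha)$.
\item It expresses $\partial_\theta\arg T(F,re^{i\theta})$ in terms of $f$ in \eqref{A-45}.
\item It then quotes, without proof, the Kaplan-type bound \eqref{A-50}, namely $\int_{\theta_1}^{\theta_2}\partial_\theta\arg T\,d\theta>-\pi/\alpha$.
\end{itemize}

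Since $\arg T(F,re^{i\theta})=\frac{\pi}{2}+\theta+\frac{1}{\alpha}\arg h(re^{i\theta})$, one has $\alpha\,\partial_\theta\arg T(F,re^{i\theta})=\alpha+\operatorname{Re}\frac{zh'(z)}{h(z)}=\operatorname{Re}P[\alpha,f](z)$. This is exactly your Step 1 identity. Your Step 3, that two values of $\arg h$ in $(-\pi/2,\pi/2)$ differ by more than $-\pi$, is precisely the justification of \eqref{A-50} that the paper leaves implicit.

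What the paper's framing buys is an interpretation: the condition becomes the image of a tangent-angle condition under the correspondence $\mathcal{B}_1(\alpha)\leftrightarrow\mathcal{C}_I(1/\alpha)$. That correspondence is only set up for $0<1/\alpha<1$, which is why the paper restricts to $\alpha>1$.

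Your direct route buys several things:
\begin{itemize}
\item It is self-contained.
\item It gives the sharper conclusion $\int_{\theta_1}^{\theta_2}\operatorname{Re}P[\alpha,f]\,d\theta>\alpha(\theta_2-\theta_1)-\pi$.
\item It works for every $\alpha>0$.
\item It avoids a slip in \eqref{A-45}, where the weights $1-\frac1\alpha$ and $\frac1\alpha$ are attached to the wrong terms. The correct derivative is $(1-\frac1\alpha)\operatorname{Re}\frac{zf'}{f}+\frac1\alpha\operatorname{Re}\bigl(1+\frac{zf''}{f'}\bigr)$, and only this form yields $\operatorname{Re}P[\alpha,f]$ after multiplying by $\alpha$.
\end{itemize}

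You do not even need univalence for $f\neq0$ and $f'\neq0$. The relation $(f/z)^{\alpha}=\alpha\int_0^1 s^{\alpha-1}h(sz)\,ds$ has positive real part, and $f'=h\,(f/z)^{1-\alpha}$.
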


\begin{proof}
    If $f(z)\in \mathcal{B}_1(\alpha)$ with $\alpha>1$, then there exists a function $F(z) \in \mathcal{C}_I(\frac{1}{\alpha})$ such that
    $$
    F'(z)=\left(\left(\frac{f(z)}{z}\right)^{\alpha-1}f'(z)\right )^{\frac{1}{\alpha}}.
    $$
    Let $T(F,re^{i\theta})$ be the tangent to the curve $F(|z|=r)$ at $F(re^{i\theta})$.

    So,
$$
\arg T(F,re^{i\theta}) = \left(1-\frac{1}{\alpha}\right)\arg f(z)+ \frac{1}{\alpha}\arg zf'(z).
$$
Therefore,
\begin{align}\label{A-45}
    \frac{\partial }{\partial \theta}\arg T(F,re^{i\theta})=  \left(1-\frac{1}{\alpha}\right)\operatorname{Re}\left(1 + \frac{zf''(z)}{f'(z)}\right) + \frac{1}{\alpha}\operatorname{Re}\left(\frac{zf'(z)}{f(z)}\right)
\end{align}
Since $F(z)\in  \mathcal{C}_z(\frac{1}{\alpha})$, we have 
\begin{align}\label{A-50}
\int_{\theta_1}^{\theta_2}\frac{\partial }{\partial \theta}\arg T(F,re^{i\theta}) > \frac{-\pi}{\alpha}.
\end{align}
Hence from \eqref{A-45} and \eqref{A-50} we get,
$$
\int_{\theta_1}^{\theta_2}\operatorname{Re}\left(1 + \frac{zf''(z)}{f'(z)}\right) + (\alpha - 1)\operatorname{Re}\left(\frac{zf'(z)}{f(z)}\right)d\theta >-\pi ~~i.e.~~ \int_{\theta_1}^{\theta_2}\operatorname{Re}P[\alpha, f](z)d\theta >-\pi.
$$

\end{proof}
\section{a coefficient problem on $\mathcal{B}_1(\alpha)$}
The coefficient problem for the function class \( B_1(\alpha) \) was first investigated by Singh \cite{ram}, who established sharp bounds for the initial Taylor coefficients of functions in this class. For \( f(z)\in \mathcal{B}_1(\alpha) \) of the form \eqref{A-05}, Singh proved exact inequalities for \( |a_2| \), \( |a_3| \), and \( |a_4| \), with \( 0 \leq \alpha \leq 1 \) and \( \alpha \geq 1 \). Finding exact bounds for higher-order coefficients, particularly for \( n \geq 5 \), has proven more difficult. Recent progress includes the exact bound for \( |a_5| \) when \( 0 \leq \alpha \leq 1/2 \), obtained by Marjono et al. \cite{marjono}. These results suggest the following obvious conjectures.\\

\begin{conj}\label{conj-1}
Let \( 0 \leq \alpha \leq 1 \). Suppose that \( g \in S \) and is given by \( g(z) = z + b_2 z^2 + \cdots \) and  
\begin{align}\label{example}
    g'(z) \left[ \frac{g(z)}{z} \right]^{\alpha-1} = 1 + 2 \sum_{n=1}^\infty z^n.
\end{align}
If \( f \in \mathcal{B}_1(\alpha) \) of the form \eqref{A-05}, then for \( n \geq 2 \),  
\[
|a_n| \leq b_n.
\]
\end{conj}
\begin{conj}\label{conj-2}
Let \( \alpha \geq 1 \) and \( f \in\mathcal{B}_1(\alpha)\) be of the form \eqref{A-05}. Then, for \( n \geq 2 \),  
\[
|a_n| \leq \frac{2}{n-1 + \alpha}.
\]
\end{conj}
The conjectures stated above for the class $\mathcal{B}_1(\alpha)$ provide a natural starting point for further research and in light of these problems, we investigate the following coefficient problem.
\begin{thm}\label{thm-01}
    Let \( f(z) \in\mathcal{B}_1(\alpha)\) be of the form \eqref{A-05}, then for $\psi(z)=\left(\frac{f(z)}{z}\right)^{\alpha}$ with $\alpha>0$, we have
    $$
    |A_n|\le\frac{2\alpha}{n+\alpha},~~n=1,2,\cdots,
    $$
    where $\psi(z)=1+\sum_{n=1}^\infty A_n z^n$ and this estimate is sharp.
\end{thm}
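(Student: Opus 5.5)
Recall that $\mathcal{B}_1(\alpha)$ is the case $m=1$, $g(z)=z$, $\beta=0$ of \eqref{A-15}. So $f\in\mathcal{B}_1(\alpha)$ means
\[
f(z)=\left[\alpha\int_0^z t^{\alpha-1}h(t)\,dt\right]^{1/\alpha}
\]
for some $h\in\mathcal{P}$ (normalized by $h(0)=1$). Raising to the power $\alpha$ and dividing by $z^\alpha$ gives
\[
\psi(z)=\left(\frac{f(z)}{z}\right)^{\alpha}=\frac{\alpha}{z^{\alpha}}\int_0^z t^{\alpha-1}h(t)\,dt .
\]
The plan is to read off the Taylor coefficients of $\psi$ directly from those of $h$ through this integral representation, and then use the Carath\'eodory bound.

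Write $h(z)=1+\sum_{n\ge1}c_nz^n$ and integrate term by term; this is legitimate on $|z|<1$ since the series converges locally uniformly. We get
\[
\alpha\int_0^z t^{\alpha-1}h(t)\,dt=z^\alpha+\sum_{n\ge1}\frac{\alpha c_n}{n+\alpha}z^{n+\alpha},
\]
where the branch of $z^\alpha$ is the principal one, consistent with $f(z)/z\to1$ as $z\to0$. Dividing by $z^\alpha$ yields
\[
A_n=\frac{\alpha}{n+\alpha}\,c_n\qquad(n\ge1).
\]
The Carath\'eodory lemma $|c_n|\le 2$ for $h\in\mathcal{P}$ then gives $|A_n|\le 2\alpha/(n+\alpha)$ immediately.

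For sharpness, take $h(z)=(1+z)/(1-z)$, for which $c_n=2$ for all $n$. Then the extremal function is
\[
f_0(z)=\left[\alpha\int_0^z t^{\alpha-1}\frac{1+t}{1-t}\,dt\right]^{1/\alpha},
\]
which lies in $\mathcal{B}_1(\alpha)$ and has $A_n=2\alpha/(n+\alpha)$ exactly. Rotations $e^{-i\theta}f_0(e^{i\theta}z)$ give equality for unimodular multiples.

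The only point needing care, and the step I expect to be the main obstacle, is justifying that $\psi$ is analytic with $\psi(0)=1$ and that the branch choices agree. Since $f$ is univalent with $f(z)/z\neq0$ in $\mathbb{D}$, $(f/z)^\alpha$ is a well-defined analytic branch with value $1$ at the origin. The bracketed integral equals $z^\alpha$ times an analytic function equal to $1$ at $0$, so the identity $\psi=\alpha z^{-\alpha}\int_0^z t^{\alpha-1}h\,dt$ holds as analytic functions. Once this is settled, the rest is the routine coefficient comparison above.
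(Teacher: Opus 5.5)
Your proof is correct and is essentially the paper's argument. The paper differentiates $\psi=(f/z)^{\alpha}$ to get $z\psi'(z)+\alpha\psi(z)=\alpha p(z)$ and compares coefficients to obtain $(n+\alpha)A_n=\alpha p_n$, which is just the differentiated form of your identity $A_n=\alpha c_n/(n+\alpha)$ read off from the integral representation; it then uses the same Carath\'eodory bound and the same extremal function built from $h(z)=(1+z)/(1-z)$.
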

\begin{proof}
    Let \( f(z) \in\mathcal{B}_1(\alpha)\). Then, we have,
    $$
   \operatorname{Re} \left( \frac{f(z)}{z} \right)^{\alpha-1}f'(z)>0,
    $$
    which is equivalent to,
    $$
    \left( \frac{f(z)}{z} \right)^{\alpha-1}f'(z)=p(z),
    $$
   where $p(z)=1+\sum_{n=1}^\infty p_n z^n$ has positive real part. \\

   Also we have,  $\psi(z)=\left(\frac{f(z)}{z}\right)^{\alpha}$ and on differentiation we get,
   $$
   \psi'(z)=\frac{\alpha}{z}.\left( \frac{f(z)}{z} \right)^{\alpha-1}f'(z)-\frac{\alpha}{z}.\psi(z).
   $$
   Thus,
   $$
   \psi'(z)=\frac{\alpha}{z}.p(z)-\frac{\alpha}{z}.\psi(z),
   $$
   which simplifies to,
   \begin{align}\label{A-500}
        z\psi'(z)+\alpha\psi(z)=\alpha p(z).
    \end{align}
   On comparing $n$-th coefficients in the above we get,
   $$
   (n+\alpha)A_n=\alpha p_n,
   $$
   which implies the desired inequality using Caratheodory bound $|p_n|\le2$.\\

   The inequality is sharp for the function 
   \begin{align}\label{example-2}
       G(z)=\left(\frac{g(z)}{z}\right)^{\alpha}=\frac{\alpha}{z^{\alpha}}.\int_0^z t^{\alpha-1} \frac{1+t}{1-t} ~dt,  \end{align} 
   where $g(z)$ is defined in \eqref{example}.

   \begin{rem}
   The function $G(z)$ defined in \eqref{example-2} satisfies the following differential subordination,
   \begin{align}\label{A-1000}
       zG'(z)+\alpha G(z) \prec \alpha \frac{1+z}{1-z},
   \end{align}
   and therefore combining \eqref{A-1000} and \eqref{A-500} we get,
   $$
   z\psi'(z)+\alpha \psi(z) \prec  zG'(z)+\alpha G(z).
   $$
   \end{rem}
\begin{rem}
    Theorem \ref{thm-01} implies the following coefficient domination relation,
    $$
   \psi(z)= \left(\frac{f(z)}{z}\right)^{\alpha}<<\left(\frac{g(z)}{z}\right)^{\alpha}=G(z).
    $$
    i.e.
    $$
    |a_n(\psi(z))|\le \frac{2\alpha}{n+\alpha},~~~\alpha>0.
    $$
\end{rem}
This establishes the validity of Conjecture \ref{conj-1} and Conjecture \ref{conj-2} for all integer values of $\alpha$. However, the validity of these conjectures for non-integer $\alpha>0$ remains an open problem. 
\end{proof}
\vspace{4mm}

\noindent\textbf{Data availability:}
Data sharing is not applicable to this article as no data sets were generated or analyzed during the current study.

\vspace{1mm}
\noindent\textbf{Acknowledgement:}
The author thanks the CSIR for the financial support through CSIR-SRF Fellowship ( File no. : 09/0973(13731)/2022-EMR-I ).

\end{document}